\documentclass[12pt,reqno]{amsart}
\usepackage[headings]{fullpage}
\usepackage{amssymb,amsmath,mathtools,bbm,tikz,tikz-cd,color,relsize,multirow}
\usepackage[all,cmtip]{xy}
\usepackage{url}
\usepackage{soul}
\usepackage[group-separator={,},group-minimum-digits={3}]{siunitx}
\usetikzlibrary{positioning}
\usetikzlibrary{calc}
\usetikzlibrary{decorations.markings}
\usetikzlibrary{arrows}
\usetikzlibrary{calc}
\usetikzlibrary{decorations.markings}
\tikzstyle{hvector}=[inner sep=2pt,draw=blue!50,fill=blue!10,thick]
\tikzstyle{unit}=[inner sep=2pt,shape=circle, draw]
\tikzstyle{counit}=[inner sep=2pt,shape=circle, draw,fill=gray]
\tikzstyle{antipode}=[inner sep=2pt,shape=rectangle, draw]
\tikzstyle{cocycle}=[inner sep=2pt,shape=circle, draw]
\tikzstyle{twistedm}=[inner sep=2pt,shape=circle, fill=gray]
\tikzstyle{autom}=[inner sep=2pt,shape=circle, draw]
\tikzstyle{coact}=[inner sep=2pt,shape=circle, fill=black]

\usepackage[bookmarks=true,%
    colorlinks=true,%
    linkcolor=blue,%
    citecolor=blue,%
    filecolor=blue,%
    menucolor=blue,%
    urlcolor=blue,%
    breaklinks=true]{hyperref}
\usepackage{slashed}    
\usepackage{listings}        
\usepackage{verbatim}
\usepackage[normalem]{ulem}  
\usepackage{diagbox}
\usepackage[shortlabels]{enumitem}

\usepackage{tikz,tikz-cd}
\usetikzlibrary{knots, hobby, decorations.pathreplacing,
  shapes.geometric, calc}

\usepackage{orcidlink}

\newtheorem{theorem}{Theorem}[section]
\theoremstyle{definition}
\newtheorem{proposition}[theorem]{Proposition}
\newtheorem{lemma}[theorem]{Lemma}

\newtheorem{corollary}[theorem]{Corollary}

\newtheorem{question}[theorem]{Question}

\def\BZ{\mathbbm Z}
\def\BQ{\mathbbm Q}

\def\be{\begin{equation}}
\def\ee{\end{equation}}

\def\LG{\mathrm{LG}}

\def\Uq{\mathrm{U}_q(\mathfrak{sl}_{2|1})}
\def\fsl{\mathfrak{sl}}
\def\sl21{\mathfrak{sl}_{2|1}}

\def\sdeghV2{\mathrm{sdeg}_\hbar^{V_2}}

\def\KM{\mathrm{KM}}

\definecolor{codegreen}{rgb}{0,0.6,0}
\definecolor{codegray}{rgb}{0.5,0.5,0.5}
\definecolor{codepurple}{rgb}{0.58,0,0.82}
\definecolor{backcolour}{rgb}{0.95,0.95,0.92}

\lstdefinelanguage{PARIGP}{
  keywords={typeof, new, true, false, catch, function, return, null, catch,
    switch, var, if, in, while, do, else, case, break},
  keywordstyle=\color{blue}\bfseries,
  ndkeywords={class, export, boolean, throw, implements, import, this},
  ndkeywordstyle=\color{darkgray}\bfseries,
  identifierstyle=\color{black},
  sensitive=false,
  comment=[l]{//},
  morecomment=[s]{/*}{*/},
  commentstyle=\color{purple}\ttfamily,
  stringstyle=\color{red}\ttfamily,
  morestring=[b]',
  morestring=[b]"
}

\lstdefinestyle{code}{
    backgroundcolor=\color{backcolour},   
    commentstyle=\color{codegreen},
    keywordstyle=\color{magenta},
    numberstyle=\tiny\color{codegray},
    stringstyle=\color{codepurple},
    basicstyle=\ttfamily\footnotesize,
    breakatwhitespace=false,         
    breaklines=true,                 
    captionpos=b,                    
    keepspaces=true,                 
    numbers=left,                    
    numbersep=5pt,                  
    showspaces=false,                
    showstringspaces=false,
    showtabs=false,                  
    tabsize=2
}

\begin{document}

\title[The colored Jones polynomial does not determine the Links--Gould polynomial]{
The colored Jones polynomial does not determine the Links--Gould polynomial}
\author{Stavros Garoufalidis}
\address[Stavros Garoufalidis]{
  International Center for Mathematics, Department of Mathematics \\
  Southern University of Science and Technology \\
  Shenzhen, China \newline
  {\tt \url{http://people.mpim-bonn.mpg.de/stavros}} }
\email{stavros@mpim-bonn.mpg.de}
\author{Shana Yunsheng Li}
\address[Shana Yunsheng Li]{
  Department of Mathematics \\
  University of Illinois \\
  Urbana, IL, USA \newline
  {\tt \url{https://li-yunsheng.github.io}}}
\email{yl202@illinois.edu}
\thanks{
  {\em Key words and phrases:}
  Knots, Jones polynomial, colored Jones polynomial, Links--Gould polynomial,
  colored Links--Gould polynomial, $V_n$-polynomial, quantized Lie superalgebras,
  $\Uq$, Conway mutation, genus 2 mutation.
}

\date{5 October 2026}

\begin{abstract}  
  We prove that the colored Jones polynomial of a knot does not determine the
  Links--Gould polynomial (associated with the 4-dimensional representation of the
  Lie superalgebra $\fsl(2|1)$) 
  with two different approaches: the first is to find a concrete genus 2 mutation pair
  with different Links--Gould polynomials, and the second follows from the fact that the 
  colored Jones polynomial is invariant under Conway mutation whereas the colored
  Links--Gould polynomial is not. The first approach also applies to a related
  family of polynomial invariants of Kashaev--Mangazeev, which are conjectured to
  be the $\fsl(n|1)$-knot invariants colored by the $2^n$-dimensional
  irreducible representation.
\end{abstract}

\maketitle

{\footnotesize
\tableofcontents
}


\section{Introduction}
\label{sec.intro}

This short paper concerns possible relations between the quantum knot polynomials
associated to representations of simple Lie (super)algebras. Ben-Michael Kohli
asked the first author whether the colored Jones polynomial determines the
Links--Gould (LG) polynomial. Our aim is to explain the answer with two different
arguments, one of which uses genus 2 mutation and the other uses Conway mutation.

\begin{theorem}
\label{thm.1}
The colored Jones polynomial of a knot does not determine the LG-polynomial.  
\end{theorem}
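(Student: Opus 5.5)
To prove Theorem~\ref{thm.1} it suffices to exhibit two knots $K_1,K_2$ with the same colored Jones polynomial $J_{K,n}(q)$ for all $n\ge 1$, but with $\LG_{K_1}\neq \LG_{K_2}$. The plan is to take $K_1,K_2$ to be a genus 2 mutant pair. The key input is the known fact that the colored Jones polynomial, and more generally every $\fsl_2$ quantum invariant, is invariant under genus 2 mutation. This holds because genus 2 mutation is realized by the hyperelliptic involution of a genus 2 handlebody. That involution commutes with the relevant tangle/skein structure for $\fsl_2$: the space of $\fsl_2$-invariant vectors on which the handlebody acts is preserved by the involution (Morton--Traczyk, Morton--Ryder, and the Kauffman bracket skein module argument). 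So the colored Jones polynomials of a genus 2 mutant pair agree for every color, and this half needs no computation.

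Next, I will choose a concrete genus 2 mutant pair. Natural candidates are knots from the standard tables with the same hyperbolic volume and the same HOMFLY-PT and Kauffman polynomials that are known to be genus 2 mutants but not Conway mutants. Examples are the pairs in Dunfield--Garoufalidis--Shumakovitch--Thistlethwaite's census, or the $14$-crossing and $15$-crossing pairs listed by Stoimenow. I will give these knots explicitly as braids or planar diagrams, and certify the genus 2 mutation either by an explicit genus 2 surface in a diagram or by citing the list.

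The final step is to compute $\LG_{K_1}(t_0,t_1)$ and $\LG_{K_2}(t_0,t_1)$, using the $R$-matrix of the $4$-dimensional representation of $\Uq$, the enhanced trace (Markov trace), and the braid presentations. Then I check that the two two-variable Laurent polynomials differ. I expect this to be the main obstacle, both in finding a pair where they actually differ and in carrying out the computation. $\LG$ is invariant under Conway mutation, and it is unclear a priori that it detects genus 2 mutation at all. Moreover, the $16^{\text{braid length}}$-sized state sums are expensive, so the search must be restricted to small pairs and must use an efficient implementation, for instance a Kashaev--Mangazeev style $V_n$ formula with $n=1$ or a tensor-network evaluation. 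I would first try the smallest known genus 2 mutant pairs of crossing number $14$--$16$, and I would compare specializations (e.g.\ $t_1=t_0^{-1}$ reduces to Alexander-like data and will not help, so compare at a generic rational point first) before computing the full polynomials.

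As a backup route, one can use colored LG invariants on a Conway mutant pair. Colored Jones is invariant under Conway mutation, so it suffices to find a Conway mutant pair, e.g.\ Kinoshita--Terasaka and Conway, whose colored LG polynomials differ. However, this route only shows that the colored Jones polynomial does not determine the colored LG invariants. Establishing Theorem~\ref{thm.1} for $\LG$ itself along these lines would require an additional relation between the two, so the genus 2 computation remains the primary argument.
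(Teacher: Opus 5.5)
The gap is that your main route reduces the theorem to exhibiting a genus 2 mutant pair with different $\LG$ polynomials, but it never produces one. The existence of such a pair is the entire content of the theorem, and you leave it to an open-ended search whose success you yourself doubt. Worse, the first candidates you propose are exactly where it fails: the cabled (hence genus 2) mutant pair $(14n22185,14n22589)$ of \cite[Fig.3.9]{DG} has \emph{equal} $\LG$ polynomials. What is missing is a mechanism that turns a failure of mutation invariance for a \emph{colored} $\LG$ polynomial into one for $\LG$ itself. The paper uses the cabling formula
\[
\LG_{2,K}(t^2,q^2)=c_{2,0}(t,q)\,\LG_{1,K(2,1)}(t,q)+c_{2,-1}(t,q)\,\LG_{1,K}(t^2q^{-1},q)+c_{2,1}(t,q)\,\LG_{1,K}(t^2q,q).
\]
On the pair above $\LG_1$ agrees while $\LG_2$ differs, so the formula forces $\LG_1$ to differ on the $(2,1)$-parallels. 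Parallels of cabled mutants are again cabled mutants, hence genus 2 mutants. So $14n22185(2,1)$ and $14n22589(2,1)$ have the same colored Jones polynomial by the invariance you already quote. The same works for $11n34(2,1)$ and $11n42(2,1)$, where the genus 2 mutation surface of the (Conway mutant) companions induces one for the satellites. The counterexamples are therefore parallels with dozens of crossings, well outside the $14$--$16$ crossing range you planned to search.

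Your backup route, once completed, is the paper's second proof. The ``additional relation'' you say is missing consists of two facts. First, the colored Jones polynomial of $K$ determines that of every satellite $K(C)$. This is because the Kauffman bracket skein module of the solid torus has a basis of Jones--Wenzl idempotents, so a colored pattern is a linear combination of colored cores. Second, by the cabling formula of \cite[Rem.4.1]{Ga:colored}, the $\LG$ polynomials of the parallels $K(n,1)$, $n\ge 1$, determine all colored $\LG$ polynomials of $K$. Suppose the colored Jones polynomial determined $\LG$. Then the colored Jones polynomial of $K$ would determine $\LG_{1,K(n,1)}$ for every $n$, hence every $\LG_{n,K}$. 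In particular $\LG_2$ would be invariant under Conway mutation, which it is not. So this route does prove the theorem for $\LG$ itself. It needs only a Conway mutant pair with different $\LG_2$ (which is what your backup route already asks for), not a new genus 2 search.
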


In a similar spirit, we can replace LG polynomial in Theorem~\ref{thm.1} with a
related family, the Kashaev--Mangazeev polynomial $\KM_n$ defined using the
$2^n$-dimensional Nichols algebra as in~\cite{KM} for $n \geq 1$. The $\KM_n$
polynomial is conjectured to equal the quantum knot polynomial associated with
the $2^n$-dimensional representation of the $\fsl(n|1)$ Lie superalgebra
(this conjecture is known to be true for $n=2$, hence $\KM_2$ is the
Links--Gould polynomial).

\begin{theorem}
\label{thm.2}
The colored Jones polynomial of a knot does not determine the $\KM_n$ polynomial
for $n=2,3$. 
\end{theorem}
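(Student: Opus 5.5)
The plan is to exhibit, for each $n=2,3$, a pair of knots $K, K'$ with the same colored Jones polynomial but $\KM_n(K)\neq \KM_n(K')$. The source of such pairs is genus 2 mutation. If $K'$ is obtained from $K$ by cutting along a genus 2 surface $\Sigma$ and regluing by the hyperelliptic involution, then all $\fsl_2$-quantum invariants agree. Indeed, for every finite-dimensional representation $V$ of $\fsl_2$ the Reshetikhin--Turaev invariant is unchanged under genus 2 mutation (Morton--Cromwell, Morton--Ryder). So $J_{K,N}(q)=J_{K',N}(q)$ for all colors $N$. On the other hand, $\fsl(2|1)$ and more generally $\fsl(n|1)$ invariants are not constrained by this symmetry argument. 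The argument for $\fsl_2$ uses that the relevant $\mathrm{End}$ of a 4-fold tensor product is generated by elements commuting with the involution, and this fails in the super setting.

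The key steps, in order, are as follows.
\begin{enumerate}
\item Choose a concrete genus 2 mutant pair. The natural first candidate is a known small pair of distinct knots that are genus 2 mutants, for instance the Conway knot $11n_{34}$ and Kinoshita--Terasaka knot $11n_{42}$. Since these are also Conway mutants, they will probably fail. Failing them, I would take a pair from the Dunfield--Garoufalidis--Shumakovitch--Thistlethwaite list of genus 2 mutants that are not Conway mutants, for example a pair of $14$-crossing knots distinguished by their hyperbolic volume being equal but with different $\mathrm{SL}_2(\mathbb C)$ or homological data. The point of using a genus 2 mutant pair is that colored Jones equality comes for free.
\item Compute $\KM_2=\LG$ for both knots. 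I would do this from braid representatives using the explicit $R$-matrix on $\mathbb{C}^4$ of $\Uq$, or via the Kashaev--Mangazeev state-sum on a planar diagram, and check that the two resulting two-variable Laurent polynomials differ.
\item Repeat the computation for $\KM_3$ with the $8$-dimensional Nichols-algebra $R$-matrix of~\cite{KM}, again checking inequality.
\end{enumerate}
Theorem~\ref{thm.2} then follows, and Theorem~\ref{thm.1} is the case $n=2$.

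For Theorem~\ref{thm.1} there is a second, independent route that I would include. Colored Jones is invariant under Conway mutation, since $\fsl_2$ irreducibles have multiplicity-free tensor squares, so the Morton--Traczyk/Lickorish argument applies. Hence it suffices to find Conway mutants distinguished by a colored $\LG$ invariant, i.e.\ an $\fsl(2|1)$ invariant attached to a higher-dimensional irreducible whose tensor square is not multiplicity free. I would pick the Kinoshita--Terasaka/Conway pair and compute this with a higher-dimensional typical representation. This only shows that colored Jones does not determine \emph{colored} LG, though. So the main text should rely on the genus 2 example, which applies directly to $\LG$ itself.

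The main obstacle is computational. Evaluating $\KM_3$ means working with $8^{m}$-dimensional state spaces for braid index $m$ of a knot with roughly $14$ or more crossings. I would first try to minimize braid index (aiming for $m\le 4$ or $5$) and work with the Kashaev--Mangazeev planar state-sum, contracting tensors along a good ordering of crossings. I would also specialize the variables to random rational or prime-field values to certify inequality cheaply: a single specialization at which the values differ suffices. The other risk is that the chosen pair happens to have equal $\LG$. If so, I would scan further through the list of genus 2 mutants, using the fast specialized computation as a filter before doing any full symbolic computation.
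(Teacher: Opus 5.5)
Your framework matches the paper's: a genus 2 mutant pair, hence equal colored Jones polynomials, followed by a machine check that $\KM_2$ and $\KM_3$ differ. But for an existence statement like this, the proof \emph{is} the exhibited pair, and you never produce one that works. Your first candidate, (11n34, 11n42), fails for the reason you give. $\KM_2=\LG$ is Conway-mutation invariant because $V(0,a)^{\otimes 2}$ is multiplicity free, and $\KM_n$ is expected to be invariant for all $n$. Your fallback, a 14-crossing pair from the list of \cite[Fig.~3.9]{DG}, also fails for $n=2$ in the one instance the paper examines. There, (14n22185, 14n22589) are cabled (hence genus 2) mutants with \emph{equal} $\LG$ polynomials, separated only by the $2$-colored $\LG_2$. ``Scan further through the list'' is a search, not an argument, so your steps 2 and 3 have no example behind them.

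The missing idea is to pass to satellites.
\begin{itemize}
\item If $K,K'$ are mutants, the $(2,1)$-parallels $K(2,1),K'(2,1)$ are again genus 2 mutants, because the mutation surface of the companion also works for the satellite.
\item In any case, Lemma~\ref{lem.TQFT} gives them equal colored Jones polynomials.
\item By the cabling formula \eqref{eq:lg-cable}, $\LG$ of $K(2,1)$ detects $\LG_2$ of $K$. That is, it sees the representation $V(1,a)$, whose tensor square is not multiplicity free, so no mutation invariance is forced.
\end{itemize}
The paper applies this to 11n34(2,1) and 11n42(2,1), 45-crossing knots, and verifies by computer that $\KM_2$ and $\KM_3$ differ (Proposition~\ref{prop.2}). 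Your idea of certifying inequality at a single specialization is a fine way to cut the cost of that check.

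Separately, your dismissal of the Conway-mutation route for $n=2$ is unwarranted. If colored Jones determined $\LG$, then by Lemma~\ref{lem.TQFT} it would determine $\LG$ of every $K(n,1)$. The cabling formula would then let it determine the colored $\LG$ polynomial, which cannot be, since $\LG_2$ is not Conway-mutation invariant. So only for $n=3$, where no such cabling formula is available, is the explicit satellite pair indispensable.
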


Let us first explain the zoo of the knot polynomials. Basically, for every
irreducible representation of a Lie (super)algebra, one can associate a knot
polynomial via the Reshetikhin--Turaev functor~\cite{RT:ribbon,Tu:book}. 

The best known quantum knot polynomial is the Jones polynomial~\cite{Jones},
which is associated to the 2-dimensional fundamental representation of the
simple Lie algebra $\fsl_2$. A stronger and more structured version is the colored
Jones polynomial, which encodes the Jones polynomial of a knot and its cables;
alternatively, it is the knot polynomial associated with each finite-dimensional
irreducible representation of $\fsl_2$.
Similarly, the $n$-colored LG polynomial is the knot polynomial associated with
the $4n$-dimensional irreducible representation $V(n-1,a)$ of $\fsl(2|1)$;
see~\cite{LG} and also~\cite{Ga:colored} for a detailed discussion.

The colored Jones polynomial is invariant under Conway mutation. The reason is that
if $V_n$ is the $n$-dimensional irreducible representation of $\fsl_2$, then
$V_n \otimes V_n$ is a direct sum of irreducible representations with multiplicity 1
and this ensures invariance under Conway mutation; see e.g.~\cite{Morton} and
also~\cite{DG}.
For the same reason, the LG polynomial is invariant under Conway mutation
since $V(0,a) \otimes V(0,a)$ decomposes as a direct sum of a 4-dimensional,
a different 4-dimensional and an 8-dimensional representation of $\fsl(2|1)$.

In a sense, genus 2 mutation is a generalization of Conway mutation \cite{DG}.
Cabled mutation is a special case of genus 2 mutation \cite[Lem. 2.12]{DG}, where,
in analogy to Conway mutation, one mutates parallels of a $(2,2)$-tangle in a
string-preserving manner. Let $K(n,m)$ denote the $(n,m)$-parallel of a knot $K$;
it is straightforward to see that if $K$ and $K'$ are cabled mutant to each other,
then $K(n,m)$ and $K'(n,m)$ are also cabled mutant to each other. 

The colored Jones polynomial is invariant under genus 2 mutation \cite[Thm. 3.2]{DG};
this follows from the fact that $V_n \otimes V_n$ is a multiplicity-free sum of
irreducible representations for all $n$. On the other hand, $V(n,a) \otimes V(n,a)$
is not a multiplicity-free sum for all $n \geq 1$ (see~\cite[Lem.3.9]{Ga:colored}),
hence the LG polynomial may fail to be genus 2 mutation invariant.

In fact, the LG polynomial is not even cabled mutation invariant, which we show
in Section~\ref{sub.concrete}.

\section{Proofs}
\label{sec.proofs}

We now give two proofs of Theorem~\ref{thm.1}.

\subsection{By computation on a cabled mutation pair}
\label{sub.concrete}

The following pair of cabled mutant knots from~\cite[Fig.3.9]{DG}
\be
\label{cable-pair}
(14n22185, 14n22589)
\ee
have equal LG polynomials but different 2-colored LG polynomials. Let $\LG_{n,K}$
denote the $n$-colored LG polynomial of a knot $K$ and recall from \cite{Ga:colored}
(where the $V_n$ polynomial is $\LG_n$) that
\be
\label{eq:lg-cable}
\LG_{2,K}(t^2, q^2) = c_{2,0}(t,q) \LG_{1,K(2,1)}(t,q)
+ c_{2, -1}(t,q) \LG_{1,K} (t^2q^{-1}, q) 
+ c_{2,1}(t,q)\LG_{1,K}(t^2q,q),
\ee
where the explicit values of $c_{i,j}$ involved are available at
\cite[eq. (19)]{GL:patterns}. \eqref{eq:lg-cable} thus implies that 
\begin{equation}
  \label{eq:lg-ineqv}
  \LG_{1,14n22185(2,1)}(t,q) \neq \LG_{1,14n22589(2,1)}(t,q),
\end{equation}
while they have the same Jones polynomial, being cabled mutant to each other. We
also confirm \eqref{eq:lg-ineqv} by direct computation with the explicit values
attached in Section~\ref{sec.compute}. This finishes our first proof of
Theorem~\ref{thm.1}. We note here that the pair \eqref{cable-pair} can be replaced
with any other pair in \cite[Fig.3.9]{DG}.

\begin{figure}[htpb!]
  \centering
  \includegraphics[scale=0.35]{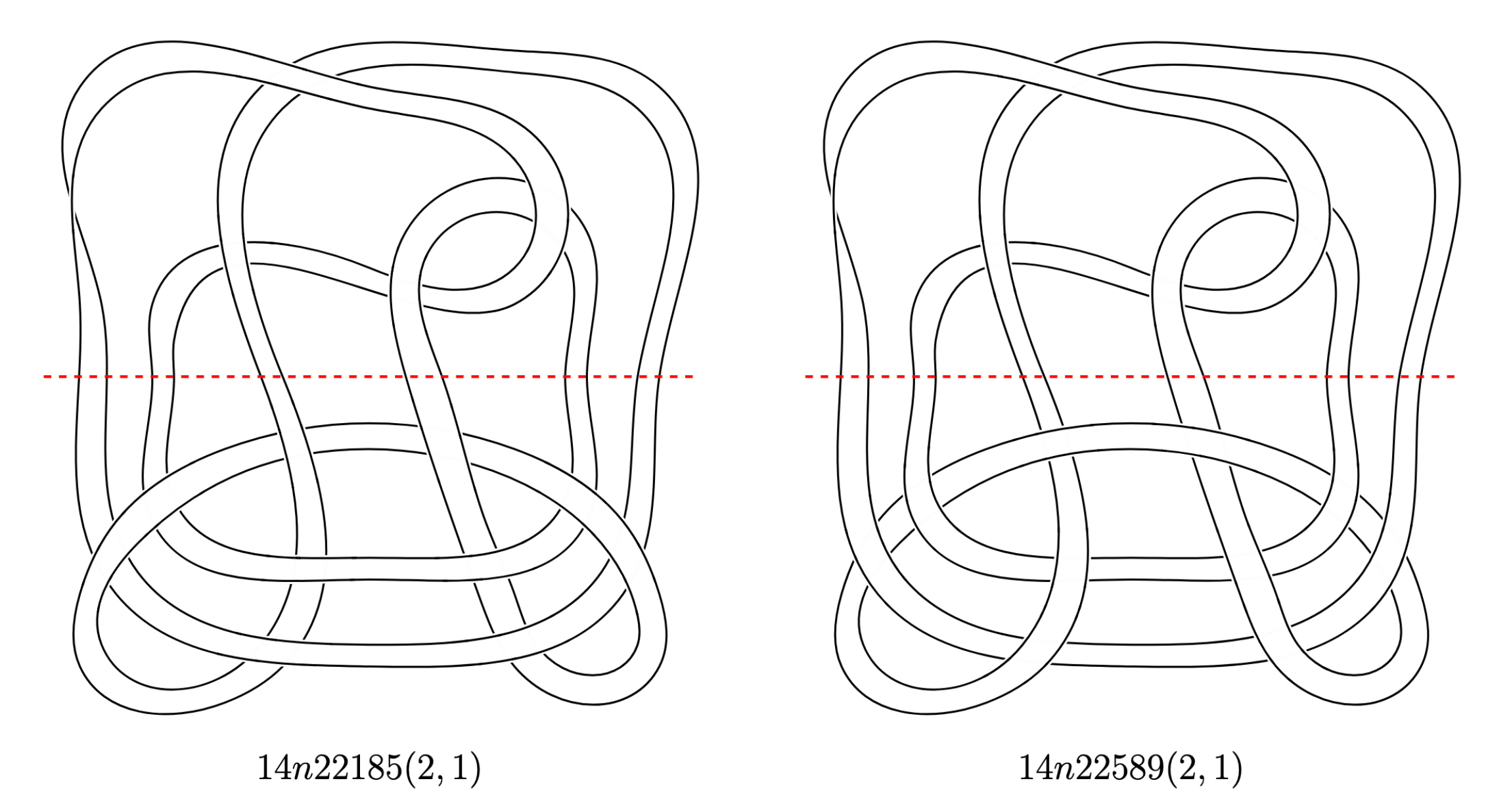}
  \quad 
\caption{The cabled mutation pair $(14n22185(2,1), 14n22589(2,1))$. }
  \label{fig:14n22185_21_14n22589_21}
\end{figure}

We now turn our attention to the Kashaev--Mangazeev polynomial on the same cabled
mutation pair. For a given integer $n\geq1$ and a knot $K$,
$\KM_{n,K}\in \BZ[t^{\pm 1}, q^{\pm 1}]$. It is conjectured that $\KM_n$ is the
quantum knot polynomial associated with the $2^n$-dimensional representation
$V(0,a)$ of $\fsl(n|1)$; this is known for $n=2$,
for which we have $\KM_{2}(t/q, q^2) = \LG_{1}(t,q)$. 
The above mentioned $2^n$-dimensional representation of $\fsl(n|1)$ has
multiplicity-free decomposition of its second tensor power, hence $\KM_n$
is expected to be Conway mutation invariant. On the other hand, 
it is not expected to be genus 2 mutation invariant, 
as we've already observed for $n=2$ from the computation of $\LG_1$.

For $\KM_3$, its R-matrix is $8$ times larger than $\LG_1$, 
which makes its computation too expensive for the two knots in Figure~\ref{fig:14n22185_21_14n22589_21}; 
instead, we compute it for the (2,1)-parallels of the two knots $11n34$ and $11n42$.
$11n34$ and $11n42$ are mutants but not string-preserving mutants, 
hence $11n34(2,1)$ and $11n42(2,1)$ are not cabled mutants; 
nonetheless they are still genus 2 mutants, as the genus 2 mutation surfaces of $11n34$ and $11n42$ 
induce the genus 2 mutation surfaces for their satellites. 
Hence $11n34(2,1)$ and $11n42(2,1)$ have the same colored Jones polynomial. 

The following proposition thus follows
from direct computation explained in Section~\ref{sec.compute}.

\begin{proposition}
\label{prop.2}
The $\KM_n$ polynomials of the pair $(11n34(2,1), 11n42(2,1))$
are not equal for $n=2,3$.
Hence, the colored Jones polynomial does not determine the $\KM_n$ polynomials
for $n=2,3$.
\end{proposition}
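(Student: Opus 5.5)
The proof has two parts: a computation showing the two polynomials differ, and a formal deduction. For the deduction, I use the topological input recorded just before the statement. The knots $11n34$ and $11n42$ are related by a genus 2 mutation surface, and this surface induces a genus 2 mutation surface for their $(2,1)$-parallels. By \cite[Thm. 3.2]{DG}, the colored Jones polynomial is invariant under genus 2 mutation, so $11n34(2,1)$ and $11n42(2,1)$ have identical colored Jones polynomials. Once the $\KM_n$ polynomials of this pair are shown to differ for $n=2,3$, no function of the colored Jones polynomial can recover $\KM_n$. This gives the second sentence of the proposition, and hence Theorem~\ref{thm.2}.

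For the computation, I will first fix explicit braid (or planar diagram) presentations of $11n34(2,1)$ and $11n42(2,1)$. These are obtained by taking braid words for $11n34$ and $11n42$ and doubling each strand with the blackboard framing corrected so the parallel has linking number $1$. I would check the presentations by confirming that the closures have the expected knot type via an independent invariant, such as hyperbolic volume or the Alexander polynomial of the parallel. Next I would build the $\KM_n$ state sum from the $R$-matrix on the $2^n$-dimensional Nichols algebra module of \cite{KM}, together with the enhancement (the pivotal element) needed for a partial trace over a $(1,1)$-tangle.

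The $\KM_n$ invariant is then evaluated as an element of $\BZ[t^{\pm1},q^{\pm1}]$. I would sanity-check the implementation in two ways. For $n=2$, I would confirm the identity $\KM_2(t/q,q^2)=\LG_1(t,q)$ on small knots against known LG values. I would also confirm that mutant pairs such as $11n34$ and $11n42$ themselves, which are Conway mutants, get equal $\KM_n$ values, as expected from multiplicity-freeness. Finally, I would exhibit a specific coefficient, or the specialization at a particular numerical value of $(t,q)$, where the two $(2,1)$-parallels differ. For $n=2$, a numerical evaluation at a generic point suffices to certify inequality, though the full polynomials can also be listed in Section~\ref{sec.compute}.

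The main obstacle is the cost of the $n=3$ computation. The $(2,1)$-parallel of an 11-crossing knot has roughly $44$ crossings and braid index about $2\cdot$(braid index of the knot). With an $8$-dimensional module, the $R$-matrix has $64^2$ entries, so a naive matrix computation is infeasible. I plan to contract the tensor network along a planar diagram in an order that keeps the open boundary small. To certify inequality I only need it at one point, so first I would evaluate at random points modulo a large prime. There the entries of the state sum are field elements rather than Laurent polynomials, which keeps memory manageable. Any single point of disagreement proves the proposition; computing the full polynomials would be confirmation only.
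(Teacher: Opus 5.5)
Your proposal follows the paper's argument: the genus 2 mutation surface relating $11n34$ and $11n42$ induces one for their $(2,1)$-parallels, so \cite[Thm.~3.2]{DG} gives equal colored Jones polynomials, and a direct tensor-contraction computation shows that the $\KM_n$ values differ for $n=2,3$. The only real difference is computational: the paper computes the full polynomials (days and 650~GB of RAM for $\KM_3$), whereas your exact evaluation at a random point modulo a large prime is a valid and much cheaper certificate of inequality, provided you apply the writhe normalization and make sure your doubled diagram really is the knot the paper uses (its $(-1)$-framed $(2,1)$-parallel, 45 crossings for $11n34(2,1)$) rather than a two-component parallel described by ``linking number $1$''.
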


\subsection{By Conway mutation and TQFTs}
\label{sub.tqft}

We give a second proof for Theorem~\ref{thm.1} using only Conway mutation
and basic principles of TQFTs.

\begin{lemma}
\label{lem.TQFT}  
If $K$ is a knot and $C$ is a pattern in the solid torus $S^1 \times D^2$, then the
colored Jones polynomial of $K$ determines the colored Jones polynomial of the
satellite $K(C)$.
\end{lemma}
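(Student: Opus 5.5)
We will prove Lemma~\ref{lem.TQFT} by cutting $S^3$ along the boundary torus of a tubular neighborhood of $K$ and using the gluing axiom of the $\fsl_2$ Reshetikhin--Turaev theory, in its generic-$q$ (skein-theoretic) version. Write $S^3 = N(K) \cup E_K$, where $E_K$ is the knot exterior. The satellite $K(C)$ is the image of $C \subset S^1\times D^2$ under the framed embedding $S^1\times D^2 \to N(K)$. The colored Jones polynomial $J_{K(C),n}$ is obtained by pairing two pieces. The first is the vector $Z(E_K)$ in the dual of the skein module of the solid torus. The second is the element of the skein module of $S^1\times D^2$ given by $C$ colored by $V_n$.

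The key structural input is that the Kauffman bracket skein module (over $\BQ(q^{1/2})$) of the solid torus is the polynomial algebra in the core, with basis the cores colored by the irreducibles $V_k$, $k\ge 1$. Equivalently, in TQFT language, the state space of the torus is spanned by the cores of the solid torus colored by irreducible representations.

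\textbf{Step 1.} Expand the pattern. In the skein module of $S^1\times D^2$, the colored pattern $C$ is a finite linear combination
\[
C^{(n)} = \sum_{k} a_{n,k}(q)\, \mathrm{core}^{(k)},
\]
where the coefficients $a_{n,k}(q)$ depend only on $C$, $n$ and the framing, and not on $K$. Concretely, one resolves crossings of $C$ inside the solid torus with the skein relations. The result is a polynomial in the core, which is then rewritten in the Chebyshev basis $\mathrm{core}^{(k)}$. The color $V_n$ itself is expressed via the Jones--Wenzl idempotent, so this works for all $n$.

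\textbf{Step 2.} Glue back. Inserting this expansion into $N(K)$ and gluing in $E_K$ gives
\[
J_{K(C),n}(q) = \sum_k a_{n,k}(q)\, J_{K,k}(q),
\]
up to a framing correction. This correction is a power of $q$ determined by the writhe and framing conventions (for instance, framing zero for $K$ and the winding behaviour of $C$), and it is again independent of $K$. Hence the family $\{J_{K,k}\}_k$ determines $\{J_{K(C),n}\}_n$.

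The main obstacle is only bookkeeping. One has to fix the normalization, so that $J$ is unnormalized or normalized consistently (dividing by the unknot value $[n]$ is harmless). One also has to track the framing factors, since the satellite construction uses the Seifert framing of $K$ while the RT invariant is a framed invariant. I will handle both by working throughout with the framed unnormalized invariant and correcting by the explicit twist eigenvalues $\theta_k = (-1)^{k-1} q^{(k^2-1)/2}$ at the end.

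Finitely many colors $k$ appear in the sum for each $n$, so no completion issues arise.
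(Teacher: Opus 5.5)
Your proposal is correct and follows the same route as the paper. Both arguments expand the $V_n$-colored pattern in the Jones--Wenzl basis of the Kauffman bracket skein module of the solid torus and then glue it into a neighborhood of $K$, which makes $J_{K(C),n}$ a $K$-independent finite linear combination of the $J_{K,k}$. Your explicit framing bookkeeping (embedding via the Seifert framing of $K$, so that the twist correction depends only on $C$) spells out details the paper's one-line proof leaves implicit.
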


\begin{proof}
The Kauffman bracket skein module of the solid torus is a polynomial algebra in
one generator (the core of the solid torus) over the field $\BQ(q^{1/4})$ and a
$\BQ(q^{1/4})$-vector space, it has a basis that consists of the Jones--Wenzl
idempotents~\cite{Lickorish}. Thus, coloring $C$ with an arbitrary color is
an element of the Kauffman bracket of the solid torus.
\end{proof}

\begin{corollary}
If the colored Jones polynomial of a knot determines the LG-polynomial, then
it determines the colored LG-polynomial.  
\end{corollary}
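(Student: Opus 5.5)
The plan is to combine Lemma~\ref{lem.TQFT} with a cabling formula expressing the $n$-colored LG polynomial of $K$ through the (uncolored) LG polynomial of cables and parallels of $K$. Suppose there is a map $\Phi$ with $\LG_{1,K} = \Phi(J_K)$ for every knot $K$, where $J_K$ denotes the colored Jones polynomial. The $n$-colored polynomial $\LG_{n,K}$ will be shown to be a fixed linear combination, with coefficients in $\BQ(t,q)$ independent of $K$, of specializations of $\LG_{1}$ evaluated on satellites $K(C_i)$ of $K$ by finitely many patterns $C_i$ in $S^1\times D^2$ (parallels $K(m,p)$ suitably framed). Applying $\Phi$ to each $\LG_{1,K(C_i)}$ and then Lemma~\ref{lem.TQFT}, which says $J_{K(C_i)}$ is determined by $J_K$, we get that $\LG_{n,K}$ is a function of $J_K$.

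The order of steps is as follows. First, recall the representation-theoretic input. The $n$-colored LG polynomial comes from $V(n-1,a)$. This representation occurs as a summand of a tensor power $V(0,a')^{\otimes m}$ for suitable parameters $a'$ and $m$; for $n=2$ it occurs in $V(0,a)^{\otimes 2}$. This gives, via the Reshetikhin--Turaev functor applied to cables, an identity of the shape \eqref{eq:lg-cable}: $\LG_{n,K}$ equals a combination of $\LG_{1,K(m,p)}$ minus lower-colored terms, and those lower-colored terms are again $\LG_1$ of $K$ with rescaled variables. Second, proceed by induction on $n$. Invert the triangular relation so that each $\LG_{n,K}$ is expressed in terms of $\LG_1$ of parallels of $K$ with explicit variable substitutions, such as $t\mapsto t^2q^{\pm1}$. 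Third, apply $\Phi$ and Lemma~\ref{lem.TQFT} termwise. This works because $\Phi$ is a function on knot invariants and the substitutions act only on the output polynomial.

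One subtlety needs attention. The variable substitutions in \eqref{eq:lg-cable} act on the LG variables, not on the Jones variable. This causes no trouble, because we only need $\LG_{1,K(C)}(t,q)$ as an element of $\BZ[t^{\pm1},q^{\pm1}]$ determined by $J_{K(C)}$; substituting into that known polynomial is then automatic. Also, parallels of a knot are links, so Lemma~\ref{lem.TQFT} and the hypothesis on $\Phi$ are really needed for satellite links with pattern in the solid torus. Two ways around this present themselves. One is to note that the lemma's skein-module proof works verbatim for multi-component patterns colored by arbitrary colors. The other is to use $(m,p)$-cables with $\gcd(m,p)=1$, which are knots, in place of parallels.

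The main obstacle is the second step: making precise that $V(n-1,a)$ splits off from a tensor power of the four-dimensional representations, with the complementary summands being lower-colored modules. Only then does the induction close in finitely many terms with knot-independent coefficients. For this I would cite the decomposition in \cite{Ga:colored} (their Lemma~3.9 and the cabling formula generalizing \eqref{eq:lg-cable}) rather than reprove it. I would also check that the non-semisimple, generic-$a$ setting still gives well-defined idempotent projections, so that the cable formula holds with rational coefficients $c_{i,j}(t,q)$.
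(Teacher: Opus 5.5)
Your proposal is correct and follows essentially the paper's route: apply Lemma~\ref{lem.TQFT} to the $(n,1)$-cables $K(n,1)$, which are knots, so that the hypothesis yields $\LG_{1,K(n,1)}$ for all $n$, and then invert the cabling formula of \cite[Rem.4.1]{Ga:colored} by induction on $n$. Of your two fixes for the knot-versus-link issue, only the second (cables with coprime parameters) actually works: the corollary's hypothesis only concerns knots, so extending the skein-module lemma to multi-component patterns would not by itself give you the LG polynomial of a parallel link.
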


\begin{proof}
By Lemma~\ref{lem.TQFT}, the colored Jones polynomial of $K$ determines the
colored Jones polynomial of its $(n,1)$-parallel $K(n,1)$ for all $n \geq 1$,
and (under the assumptions of the corollary), it determines the LG polynomial of
$K(n,1)$ for all $n$. The cabling formula of~\cite[Rem.4.1]{Ga:colored} implies that
the LG polynomial of $K(n,1)$ for all $n$ determines the colored Links--Gould
polynomial of $K$.
\end{proof}  

It follows that if the colored Jones polynomial of a knot determines the
LG-polynomial, then the colored LG-polynomial should be invariant under mutation;
however, $\LG_2$ is not, hence the contradiction proves Theorem~\ref{thm.1}.

Finally, one can ask a converse question.

\begin{question}
\label{que.1}
Does the colored Links--Gould polynomial determine the Jones polynomial of a knot?
\end{question}
We think the answer is no, based on loop-expansion considerations,
but we cannot prove this.

\section{Computation}
\label{sec.compute}

In this section we exhibit the usage of code for the computation mentioned in
Section~\ref{sub.concrete} and present some explicit values of polynomials. 
The computation was performed using the method explained in~\cite{GL:patterns}
and~\cite{Li:FPT-RT} and its Python implementation by the second author in
\texttt{SnapPy}~\cite{snappy}, which is currently available with the in-development
version of \texttt{Spherogram}~\cite{spherogram}. 

The code below checks the equalities of values of $\LG_1$ and $\LG_2$ polynomials
of the pair of cabled mutant knots in \eqref{cable-pair}. 


\begin{lstlisting}
sage: import snappy
sage: K1, K2 = snappy.Link('14n22185'), snappy.Link('14n22589')
sage: LG1_K1 = K1.colored_links_gould_polynomial(1) 
sage: LG1_K2 = K2.colored_links_gould_polynomial(1)
sage: LG1_K1 == LG1_K2
True
sage: LG2_K1 = K1.colored_links_gould_polynomial(2)
sage: LG2_K2 = K2.colored_links_gould_polynomial(2)
sage: LG2_K1 == LG2_K2
False
\end{lstlisting}

For obtaining the (0-framed) (2,1)-parallels of the two knots we use the code in \cite{knotsatellite}:

\begin{lstlisting}
sage: from parallel import *
sage: K1_21 = parallel(2, 1, K1)
sage: K2_21 = parallel(2, 1, K2)
\end{lstlisting}

The following checks that they have equal Jones but different LG polynomials:

\begin{lstlisting}
sage: jones_K1_21 = K1_21.colored_jones_polynomial(1)
sage: jones_K2_21 = K2_21.colored_jones_polynomial(1)
sage: jones_K1_21 == jones_K2_21
True
sage: LG_K1_21 = K1_21.colored_links_gould_polynomial(1)
sage: LG_K2_21 = K2_21.colored_links_gould_polynomial(1)
sage: LG_K1_21 == LG_K2_21
False
\end{lstlisting}

The value of the Jones polynomial is as follows:
\begin{tiny}
\begin{equation}
  \begin{aligned}
    J_{14n22185(2,1)}(q) = & q^{-19} - q^{-18} - q^{-16} + q^{-14} - q^{-11} + q^{-10} + 2q^{-9} - 2q^{-8} - 2q^{-7} + 3q^{-6} - q^{-5} 
    \\ & - q^{-4} + 2q^{-3} + 1 - 3q^3 + 2q^4 - 3q^6 + 2q^7 + 2q^8 - q^9 + q^{12} - q^{14} - q^{16} + q^{17}
    \\ = &  J_{14n22589(2,1)}(q)
      \end{aligned}
\end{equation}
\end{tiny}
The explicit values of LG polynomials are rather long. To shorten
our presentation, we use the following abbreviation: if
$f(t,q) \in \BZ[t^{\pm 1},q^{\pm 1}]$ satisfies $f(t,q)=f(t^{-1},q)$ then we write it in the form
\be
f(t,q) = f^{[0]}(q) + \sum_{k=1}^d f^{[k]}(q) (t^k + t^{-k})
\ee
and only give the coefficients $f^{[k]}(q) \in \BZ[q^{\pm 1}]$ for $k=0,1,\dots,d$. 

With the convention above, we have:
    
\begin{tiny}
\begin{equation}
  \begin{aligned}
    \LG_{1, 14n22185(2,1)}^{[0]}(q) = &2 q^{-24} + 2 q^{-22} - 4 q^{-20} - 6 q^{-18} + 6 q^{-14} + 2 q^{-12} + 8 q^{-8} - 10 q^{-6} - 7 + 6 q^{2} \\
    & - 4 q^{4} + 8 q^{6} + 2 q^{8} - 2 q^{10} + 6 q^{12} - 4 q^{14} - 6 q^{16} - 2 q^{18} + 2 q^{20} + 2 q^{22}\\
\LG_{1, 14n22185(2,1)}^{[1]}(q) = &-2 q^{-23} + 4 q^{-19} + q^{-17} - 2 q^{-15} - 4 q^{-13} + 5 q^{-11} - 9 q^{-7} + 15 q^{-5} - 14 q^{-3} + 9 q^{-1} \\
&- 6 q^{3} + 8 q^{5} - 12 q^{7} + 12 q^{9} - 7 q^{11} - 2 q^{13} + 4 q^{15} + q^{17} - q^{19} - q^{21} + 2 q^{23} - q^{27}\\
\LG_{1, 14n22185(2,1)}^{[2]}(q) = &q^{-18} + q^{-16} - 3 q^{-12} - 5 q^{-10} + 3 q^{-8} + 3 q^{-4} + 4 q^{-2} - 3 + 3 q^{2} - q^{4} - 2 q^{6} \\
&- 3 q^{8} - 3 q^{10} + 3 q^{12} + 2 q^{14} + q^{16} + 2 q^{18} - 3 q^{22} - 2 q^{24} + q^{26} + q^{28}\\
\LG_{1, 14n22185(2,1)}^{[3]}(q) = &-q^{-17} + q^{-15} - q^{-13} + 2 q^{-11} + 3 q^{-9} - 7 q^{-7} + 6 q^{-5} - 4 q^{-3} - q^{-1} + 4 q - 5 q^{3} \\
&+ 4 q^{5} - 2 q^{7} + 2 q^{9} + q^{11} - 3 q^{13} + q^{15} - 2 q^{19} + q^{21} + 2 q^{23} - q^{27}\\
\LG_{1, 14n22185(2,1)}^{[4]}(q) = &-q^{-16} + 2 q^{-12} + q^{-8} - 4 q^{-4} + q^{-2} - 1 - q^{2} + 2 q^{4} + 3 q^{8} - q^{12} - 2 q^{16} + q^{20}\\
\LG_{1, 14n22185(2,1)}^{[5]}(q) = &q^{-15} - 2 q^{-13} + 3 q^{-9} - 4 q^{-7} + 2 q^{-5} + q^{-3} - 2 q^{-1} + 3 q - 3 q^{3} + 2 q^{5} \\
&- q^{7} - 2 q^{9} + 4 q^{11} - 3 q^{13} + 2 q^{17} - q^{19}\\
\LG_{1, 14n22185(2,1)}^{[6]}(q) = &q^{-14} - 3 q^{-10} + 2 q^{-6} + q^{-2} - q^{6} - 2 q^{10} + 3 q^{14} - q^{18}\\
\LG_{1, 14n22185(2,1)}^{[7]}(q) = &-q^{-13} + q^{-11} + 2 q^{-9} - 2 q^{-7} - q^{-1} + q - q^{3} + q^{5} + 2 q^{11} - 2 q^{13} - q^{15} + q^{17}
  \end{aligned}
\end{equation}

\begin{equation}
  \begin{aligned}
    \LG_{1, 14n22589(2,1)}^{[0]}(q) = &2 q^{-24} + 2 q^{-22} - 4 q^{-20} - 8 q^{-18} + 10 q^{-14} + 2 q^{-12} + 2 q^{-10} + 8 q^{-8} - 16 q^{-6} \\
    &- 2 q^{-2} - 7 + 8 q^{2} - 4 q^{4} + 14 q^{6} + 2 q^{8} - 4 q^{10} + 6 q^{12} - 8 q^{14} - 6 q^{16} + 2 q^{20} + 2 q^{22}\\
\LG_{1, 14n22589(2,1)}^{[1]}(q) = &-2 q^{-23} + 4 q^{-19} + 3 q^{-17} - 6 q^{-15} - 3 q^{-13} + 10 q^{-11} - 10 q^{-9} - 3 q^{-7} + 17 q^{-5} - 18 q^{-3} \\
& + 16 q^{-1} - 7 q - q^{3} + 5 q^{5} - 16 q^{7} + 20 q^{9} - 15 q^{11} + 9 q^{15} - 2 q^{17} - q^{19} - q^{21} + 2 q^{23} - q^{27}\\
\LG_{1, 14n22589(2,1)}^{[2]}(q) = &q^{-18} + 3 q^{-16} - q^{-14} - 9 q^{-12} - 2 q^{-10} + 7 q^{-8} - 2 q^{-6} + 5 q^{-4} + 3 q^{-2} - 3 \\
&+ 3 q^{2} - 3 q^{4} - q^{6} - 7 q^{8} - q^{10} + 9 q^{12} - q^{14} - q^{16} + 3 q^{18} - 3 q^{22} - 2 q^{24} + q^{26} + q^{28}\\
\LG_{1, 14n22589(2,1)}^{[3]}(q) = &-q^{-17} - 2 q^{-15} + 3 q^{-13} + 6 q^{-11} - 4 q^{-9} - 3 q^{-7} + 4 q^{-5} - 7 q^{-3} + 3 q^{-1} - q \\
&+ 2 q^{5} - q^{7} + 8 q^{9} - 7 q^{11} - 2 q^{13} + 3 q^{15} - 2 q^{17} - q^{19} + q^{21} + 2 q^{23} - q^{27}\\
\LG_{1, 14n22589(2,1)}^{[4]}(q) = &-q^{-4} + q^{-2} - q^{2} + q^{4}\\
  \end{aligned}
\end{equation}
\end{tiny}

Likewise, using the code in \cite{sln1poly},
one can compute the $\KM_n$ polynomials for the ($-1$)-framed (2,1)-parallels 
of $11n34$ and $11n42$ (so that no additional half-twist is added) for $n=2,3$ as follows and
check that they differ as asserted in Proposition~\ref{prop.2}. 

\begin{lstlisting}
sage: import snappy
sage: K1, K2 = snappy.Link('11n34'), snappy.Link('11n42')
sage: from parallel import *
sage: K1_21 = parallel(2, 1, K1, framing=-1)
sage: K2_21 = parallel(2, 1, K2, framing=-1)
sage: from sln1 import *
sage: [sln1(n, K1_21) == sln1(n, K2_21) for n in [2,3]]
[False, False]
\end{lstlisting}

The computation of $\KM_3$, however, is very expensive, finishing in days and taking 650~GB of RAM for $11n34(2,1)$, 
as the knot has 45 crossings and the best possible contraction sequence the implementation found
has contraction-width 14. In comparison, the computation for $\KM_2$ took only about 1.5~GB of RAM and finished in a few minutes for both of the paralleled knots. 

$\KM_3$ is not invariant under $t\mapsto t^{-1}$, but it is after the change of variables $\KM_3(t/q,q)$. We therefore give the values of $\KM_3(t/q,q)$ for the two knots as follows, using the same abbreviation as for the LG polynomials above. 
\begin{tiny}
  \begin{equation}
    \begin{aligned}
      \KM_{3, 11n34(2,1)}^{[0]}(q) = &q^{-19} + 8 q^{-18} - 14 q^{-17} + 42 q^{-16} - 20 q^{-15} + 5 q^{-14} - 11 q^{-13} + 4 q^{-12} - 183 q^{-11} + 226 q^{-10} \\
      &- 287 q^{-9} + 221 q^{-8} + 30 q^{-7} - 86 q^{-6} + 355 q^{-5} - 381 q^{-4} + 528 q^{-3} - 441 q^{-2} + 11 q^{-1} - 100 - 121 q \\
      &+ 167 q^{2} - 100 q^{3} + 219 q^{4} - 93 q^{5} + 81 q^{6} - 50 q^{7} + 18 q^{8} - 35 q^{9} + 10 q^{10} - 10 q^{11} + 8 q^{12} - q^{13}\\
\KM_{3, 11n34(2,1)}^{[1]}(q) = &-2 q^{-19} + 3 q^{-18} - 4 q^{-17} + 9 q^{-16} - 16 q^{-15} + 10 q^{-14} - 24 q^{-13} + 40 q^{-12} - 46 q^{-11} + 77 q^{-10} \\
&- 53 q^{-9} + 70 q^{-8} - 26 q^{-7} - 87 q^{-6} + 146 q^{-5} - 205 q^{-4} + 108 q^{-3} - 140 q^{-2} + 106 q^{-1} + 96 - 67 q \\
&+ 84 q^{2} - 70 q^{3} + 11 q^{4} - 23 q^{5} - 5 q^{6} - 3 q^{7} + 14 q^{8} - 17 q^{9} + 22 q^{10} - 10 q^{11} + q^{12} + q^{13}\\
\KM_{3, 11n34(2,1)}^{[2]}(q) = &2 q^{-19} - 7 q^{-18} + 4 q^{-17} - 22 q^{-16} + 9 q^{-14} + 31 q^{-13} - 20 q^{-12} + 105 q^{-11} - 80 q^{-10} + 73 q^{-9} \\
& - 119 q^{-8} + 8 q^{-7} + 8 q^{-6} - 193 q^{-5} + 189 q^{-4} - 169 q^{-3} + 256 q^{-2} - 56 q^{-1} + 72 + 22 q - 127 q^{2}\\
& + 75 q^{3} - 113 q^{4} + 57 q^{5} - 36 q^{6} + 31 q^{7} - 5 q^{8} + 7 q^{9} - 2 q^{10} + 3 q^{11} - 3 q^{12}\\
\KM_{3, 11n34(2,1)}^{[3]}(q) = &q^{-19} - 6 q^{-18} + 7 q^{-17} + 13 q^{-15} - 16 q^{-14} + 22 q^{-13} - 46 q^{-12} + 74 q^{-11} - 147 q^{-10}\\
& + 101 q^{-9} - 11 q^{-8} - 93 q^{-7} + 161 q^{-6} - 138 q^{-5} + 231 q^{-4} - 123 q^{-3} + 66 q^{-2} - 40 q^{-1} - 152 \\
&+ 90 q - 41 q^{2} + 29 q^{3} - 19 q^{4} + 43 q^{5} + 9 q^{6} - 21 q^{7} + 8 q^{8} + 19 q^{9} - 32 q^{10} + 16 q^{11} - 5 q^{12}\\
\KM_{3, 11n34(2,1)}^{[4]}(q) = &-q^{-19} + 5 q^{-18} + 9 q^{-17} - 21 q^{-16} + 25 q^{-15} - 8 q^{-14} - 39 q^{-13} - 3 q^{-12} + 2 q^{-11} + 31 q^{-10} \\
&- 7 q^{-9} + 47 q^{-8} + 19 q^{-7} + 20 q^{-6} - 26 q^{-5} - 48 q^{-4} - 77 q^{-3} - 55 q^{-2} + 87 q^{-1} - 4 + 55 q \\
&+ 28 q^{2} + 19 q^{3} - 5 q^{4} - 61 q^{5} + 16 q^{6} - 21 q^{7} - 2 q^{8} + 9 q^{9} + 2 q^{10} + 7 q^{11} - 3 q^{12}\\
\KM_{3, 11n34(2,1)}^{[5]}(q) = &-q^{-19} - 3 q^{-18} + 3 q^{-17} - 7 q^{-16} + 16 q^{-15} - 10 q^{-14} + 9 q^{-13} + 46 q^{-12} - 70 q^{-11} + 23 q^{-10} \\
&- 30 q^{-9} - 5 q^{-8} + 38 q^{-7} - 60 q^{-6} + 70 q^{-5} - 34 q^{-4} + 44 q^{-3} + 29 q^{-2} - 50 q^{-1} - 1 - 38 q + 21 q^{2} \\
&- 21 q^{3} + 20 q^{4} + 26 q^{5} - 5 q^{6} + 2 q^{7} - 18 q^{8} + 9 q^{9} - q^{10} - 7 q^{11} + 5 q^{12}\\
\KM_{3, 11n34(2,1)}^{[6]}(q) = &q^{-19} + 5 q^{-18} - 16 q^{-17} + 20 q^{-16} - 16 q^{-15} + 2 q^{-14} - 2 q^{-13} + 4 q^{-12} + 20 q^{-11} - 25 q^{-10}\\
& + 51 q^{-9} - 96 q^{-8} + 40 q^{-7} - 32 q^{-6} + 13 q^{-5} + 54 q^{-4} - 26 q^{-3} + 95 q^{-2} - 67 q^{-1} + 33 - 61 q \\
&- 26 q^{2} - 12 q^{3} + q^{4} + 27 q^{5} - 10 q^{6} + 49 q^{7} - 19 q^{8} + 2 q^{9} - 7 q^{10} - 3 q^{11} + q^{12}\\
\KM_{3, 11n34(2,1)}^{[7]}(q) = &-q^{-19} + 5 q^{-18} - 10 q^{-17} + 12 q^{-16} - 21 q^{-15} + 12 q^{-14} + 11 q^{-13} - 28 q^{-12} + 49 q^{-11}\\
& - 42 q^{-10} + 59 q^{-9} - 61 q^{-8} + 44 q^{-7} - 8 q^{-6} - 63 q^{-5} + 70 q^{-4} - 105 q^{-3} + 82 q^{-2} - 68 q^{-1} \\
&+ 72 + 20 q - 32 q^{2} + 60 q^{3} - 58 q^{4} + 18 q^{5} - 31 q^{6} + 13 q^{7} - 8 q^{8} - 2 q^{9} + 16 q^{10} - 4 q^{11} - q^{12}\\
\KM_{3, 11n34(2,1)}^{[8]}(q) = &q^{-19} - 8 q^{-18} + 10 q^{-17} + 2 q^{-16} - 4 q^{-15} + 10 q^{-14} - 6 q^{-13} + 7 q^{-12} - 49 q^{-11} + 7 q^{-10} + 13 q^{-9} \\
&+ 40 q^{-7} + 19 q^{-6} + 27 q^{-5} - 40 q^{-4} - 20 q^{-3} - 34 q^{-2} - 44 q^{-1} + 12 + 33 q + 28 q^{2} + 16 q^{3} + 9 q^{4}\\
& + 2 q^{5} - 28 q^{6} - 10 q^{7} + 10 q^{8} - 9 q^{9} + 4 q^{10} + 2 q^{12}\\
\KM_{3, 11n34(2,1)}^{[9]}(q) = &2 q^{-19} - 2 q^{-18} + 8 q^{-17} - 10 q^{-16} + 2 q^{-15} - q^{-14} - 28 q^{-13} + 20 q^{-12} - 12 q^{-11} + 68 q^{-10} \\
&- 45 q^{-9} + 45 q^{-8} - 31 q^{-7} - 39 q^{-6} + 6 q^{-5} - 68 q^{-4} + 106 q^{-3} - 58 q^{-2} + 107 q^{-1} - 34 \\
&- 16 q - 11 q^{2} - 44 q^{3} + 37 q^{4} - 41 q^{5} + 46 q^{6} - 8 q^{7} + 10 q^{8} - 8 q^{9} - q^{10} + 2 q^{11} - 2 q^{12}\\
\KM_{3, 11n34(2,1)}^{[10]}(q) = &-2 q^{-19} + 2 q^{-18} + q^{-17} - 11 q^{-16} + 13 q^{-15} - 17 q^{-14} + 43 q^{-13} - 22 q^{-12} + 17 q^{-11} + 5 q^{-10}\\
& - 53 q^{-9} + 46 q^{-8} - 109 q^{-7} + 74 q^{-6} - 38 q^{-5} + 39 q^{-4} + 58 q^{-3} - 50 q^{-2} + 84 q^{-1} - 85 \\
&+ 42 q - 45 q^{2} - 16 q^{3} + 19 q^{4} - 19 q^{5} + 31 q^{6} - 19 q^{7} + 16 q^{8} - q^{9} - q^{10} - q^{11} - q^{12}\\
\KM_{3, 11n34(2,1)}^{[11]}(q) = &-q^{-19} - q^{-18} - q^{-17} + 15 q^{-15} - 8 q^{-14} + 15 q^{-13} - 15 q^{-12} - 3 q^{-11} - 25 q^{-10} - 26 q^{-9} \\
&+ 32 q^{-8} - 9 q^{-7} + 67 q^{-6} + 10 q^{-5} + 5 q^{-4} - 17 q^{-3} - 56 q^{-2} - 7 q^{-1} - 32 + 19 q + 34 q^{2} + 11 q^{3} \\
&+ 16 q^{4} - 8 q^{5} - 7 q^{6} - 5 q^{7} - 7 q^{8} + 4 q^{9} - 4 q^{10} + 3 q^{11} + q^{12}\\
    \end{aligned}
  \end{equation}
\end{tiny}
\begin{tiny}
  \begin{equation}
    \begin{aligned}
\KM_{3, 11n34(2,1)}^{[12]}(q) = &q^{-19} + 3 q^{-18} - 4 q^{-17} + 5 q^{-16} - 11 q^{-15} + 7 q^{-14} - 23 q^{-13} + 7 q^{-12} + 7 q^{-11} + q^{-10} + 42 q^{-9} \\
&- 17 q^{-8} + 35 q^{-7} - 55 q^{-6} + 3 q^{-5} - 25 q^{-4} - 17 q^{-3} + 35 q^{-2} - 19 q^{-1} + 49 - 32 q + 24 q^{2} \\
&- 11 q^{3} - 8 q^{4} + 11 q^{5} - 13 q^{6} + 11 q^{7} - 11 q^{8} + 6 q^{9} - 2 q^{10} + q^{11}\\
\KM_{3, 11n34(2,1)}^{[13]}(q) = &q^{-18} - 2 q^{-17} + 5 q^{-16} - 10 q^{-15} + 4 q^{-14} - 4 q^{-13} + 2 q^{-12} + 18 q^{-11} - 19 q^{-10} + 43 q^{-9} \\
&- 45 q^{-8} + 16 q^{-7} - 25 q^{-6} - 21 q^{-5} + 36 q^{-4} - 40 q^{-3} + 74 q^{-2} - 34 q^{-1} + 31 - 9 q - 23 q^{2} \\
&+ 9 q^{3} - 30 q^{4} + 24 q^{5} - 12 q^{6} + 11 q^{7} + q^{8} + q^{9} - 2 q^{11}\\
\KM_{3, 11n34(2,1)}^{[14]}(q) = &-2 q^{-18} + q^{-16} + 2 q^{-15} + 7 q^{-14} - 2 q^{-13} + 6 q^{-12} - 11 q^{-11} - 10 q^{-10} - 6 q^{-9} - 13 q^{-8} + 24 q^{-7} \\
&+ 22 q^{-5} + 6 q^{-4} - 13 q^{-3} + 3 q^{-2} - 26 q^{-1} + 5 - 5 q + 4 q^{2} + 12 q^{3} - 4 q^{4} + 4 q^{5} - 3 q^{6} - q^{8} - q^{9} + q^{10}\\
\KM_{3, 11n34(2,1)}^{[15]}(q) = &q^{-17} - q^{-16} - q^{-15} + 2 q^{-14} - 5 q^{-13} + 9 q^{-12} - 14 q^{-11} + 17 q^{-10} - 13 q^{-9} \\
&+ 6 q^{-8} + 5 q^{-7} - 14 q^{-6} + 23 q^{-5} - 26 q^{-4} + 24 q^{-3} - 21 q^{-2} \\
&+ 6 q^{-1} - 5 q + 11 q^{2} - 8 q^{3} + 11 q^{4} - 8 q^{5} + 4 q^{6} - 4 q^{7} + q^{8} - q^{9} + q^{10}\\
\KM_{3, 11n34(2,1)}^{[16]}(q) = &q^{-17} - q^{-16} + q^{-15} - 4 q^{-14} + 2 q^{-13} - 3 q^{-12} + 3 q^{-11} + 5 q^{-10} - q^{-9} + 8 q^{-8} - 13 q^{-7} + 5 q^{-6} \\
&- 12 q^{-5} + q^{-4} + 6 q^{-3} - 3 q^{-2} + 14 q^{-1} - 9 + 7 q - 7 q^{2} - 2 q^{3} + 2 q^{4} - 3 q^{5} + 4 q^{6} - 2 q^{7} + 2 q^{8} - q^{9}\\
\KM_{3, 11n34(2,1)}^{[17]}(q) = &-q^{-16} + 2 q^{-15} - 2 q^{-14} + 4 q^{-13} - 4 q^{-12} + 3 q^{-11} - 3 q^{-10} - 3 q^{-9} + 5 q^{-8} - 6 q^{-7} + 10 q^{-6} \\
&- 6 q^{-5} + 5 q^{-4} - 3 q^{-3} - 3 q^{-2} + 3 q^{-1} - 4 + 4 q - 2 q^{2} + 2 q^{3} - q^{4}
    \end{aligned}
  \end{equation}
\end{tiny}

\begin{tiny}
  \begin{equation}
    \begin{aligned}
    \KM_{3, 11n42(2,1)}^{[0]}(q) = &q^{-19} + 4 q^{-18} + 22 q^{-16} - 58 q^{-15} + 141 q^{-14} - 189 q^{-13} + 108 q^{-12} - 177 q^{-11} + 226 q^{-10} - 289 q^{-9} \\
&+ 197 q^{-8} + 208 q^{-7} - 312 q^{-6} + 465 q^{-5} - 531 q^{-4} + 534 q^{-3} - 475 q^{-2} + 75 q^{-1} + 36 - 225 q \\
&+ 249 q^{2} - 64 q^{3} + 127 q^{4} - 105 q^{5} + 103 q^{6} - 86 q^{7} + 62 q^{8} - 71 q^{9} + 34 q^{10} - 18 q^{11} + 10 q^{12} - q^{13}\\
\KM_{3, 11n42(2,1)}^{[1]}(q) = &-q^{-19} + 2 q^{-18} - 6 q^{-17} + 21 q^{-16} - 33 q^{-15} + 28 q^{-14} - 56 q^{-13} + 80 q^{-12} - 45 q^{-11} + 7 q^{-10} \\
&+ 21 q^{-9} + 33 q^{-8} - 31 q^{-7} - 131 q^{-6} + 228 q^{-5} - 185 q^{-4} + 73 q^{-3} - 59 q^{-2} + 25 q^{-1} + 86 \\
&- 101 q + 121 q^{2} - 74 q^{3} - 40 q^{4} + 62 q^{5} - 11 q^{6} - 46 q^{7} + 41 q^{8} - 13 q^{9} + 7 q^{10} - 4 q^{11} + q^{13}\\
\KM_{3, 11n42(2,1)}^{[2]}(q) = &q^{-19} - 3 q^{-18} - 29 q^{-16} + 38 q^{-15} - 52 q^{-14} + 99 q^{-13} - 87 q^{-12} + 147 q^{-11} - 94 q^{-10} + 57 q^{-9} \\
&- 100 q^{-8} - 49 q^{-7} + 95 q^{-6} - 275 q^{-5} + 297 q^{-4} - 229 q^{-3} + 306 q^{-2} - 81 q^{-1} + 55 + 31 q - 168 q^{2} \\
&+ 88 q^{3} - 106 q^{4} + 56 q^{5} - 44 q^{6} + 68 q^{7} - 44 q^{8} + 37 q^{9} - 19 q^{10} + 12 q^{11} - 7 q^{12}\\
\KM_{3, 11n42(2,1)}^{[3]}(q) = &-q^{-19} - 6 q^{-18} + 13 q^{-17} - 20 q^{-16} + 50 q^{-15} - 62 q^{-14} + 78 q^{-13} - 73 q^{-12} + 36 q^{-11} - 69 q^{-10} \\
&+ 15 q^{-9} + 46 q^{-8} - 100 q^{-7} + 222 q^{-6} - 207 q^{-5} + 201 q^{-4} - 89 q^{-3} - 23 q^{-2} + 24 q^{-1} - 152 + 135 q \\
&- 49 q^{2} + 16 q^{3} + 42 q^{4} - 31 q^{5} - 6 q^{6} + 33 q^{7} - 37 q^{8} + 24 q^{9} - 13 q^{10} + 4 q^{11} - q^{12}\\
\KM_{3, 11n42(2,1)}^{[4]}(q) = &q^{-19} + 5 q^{-18} - 4 q^{-17} + 12 q^{-16} - 2 q^{-15} - 26 q^{-14} - q^{-13} + 5 q^{-12} - 57 q^{-11} + 61 q^{-10} + 15 q^{-9}\\
& + 17 q^{-8} - 8 q^{-7} + 55 q^{-6} + 6 q^{-5} - 81 q^{-4} + 2 q^{-3} - 84 q^{-2} + 63 q^{-1} - 70 + 104 q + 4 q^{2} - 21 q^{3} \\
&+ 65 q^{4} - 61 q^{5} + 17 q^{6} - 28 q^{7} + 11 q^{8} - 8 q^{9} + 7 q^{10} - q^{11} + 2 q^{12}\\
\KM_{3, 11n42(2,1)}^{[5]}(q) = &q^{-18} - 7 q^{-17} + 8 q^{-16} - 20 q^{-15} + 27 q^{-14} - 14 q^{-13} + 19 q^{-12} - 7 q^{-11} + 29 q^{-10} - 18 q^{-9} \\
&- 41 q^{-8} + 60 q^{-7} - 94 q^{-6} + 18 q^{-5} - 16 q^{-4} + 53 q^{-3} + 40 q^{-2} - 10 q^{-1} + 27 - 41 q - 41 q^{2} \\
&+ 12 q^{3} + 9 q^{4} - 14 q^{5} + 27 q^{6} - 3 q^{7} - 7 q^{9} + 5 q^{10} - 2 q^{11}\\
\KM_{3, 11n42(2,1)}^{[6]}(q) = &-2 q^{-18} + 2 q^{-17} + 2 q^{-16} - 7 q^{-15} + 12 q^{-14} - 4 q^{-13} + 3 q^{-12} + 7 q^{-11} - 37 q^{-10} + 43 q^{-9} \\
&- 58 q^{-8} + 24 q^{-7} + 6 q^{-6} + 10 q^{-5} + 38 q^{-4} - 50 q^{-3} + 54 q^{-2} - 61 q^{-1} + 37 - 28 q + 4 q^{2} - 2 q^{3} \\
&- 19 q^{4} + 39 q^{5} - 30 q^{6} + 25 q^{7} - 5 q^{8} + 2 q^{9} - 5 q^{10}\\
    \end{aligned}
  \end{equation}
\end{tiny}
\begin{tiny}
  \begin{equation}
    \begin{aligned}
\KM_{3, 11n42(2,1)}^{[7]}(q) = &2 q^{-17} - 3 q^{-16} + 2 q^{-15} - q^{-13} - 13 q^{-12} + 27 q^{-11} - 34 q^{-10} + 27 q^{-9} - 10 q^{-8} + 11 q^{-7} \\
&+ 17 q^{-6} - 32 q^{-5} + 42 q^{-4} - 69 q^{-3} + 36 q^{-2} - 35 q^{-1} + 23 + 7 q + 2 q^{2} + 32 q^{3} - 37 q^{4} \\
&+ 22 q^{5} - 21 q^{6} + 5 q^{7} - 4 q^{8} + 2 q^{9} + 2 q^{10}\\
\KM_{3, 11n42(2,1)}^{[8]}(q) = &-q^{-16} - q^{-15} + 5 q^{-14} - 5 q^{-13} + 4 q^{-12} - 6 q^{-11} + 6 q^{-10} + 3 q^{-9} + 4 q^{-8} - 6 q^{-7} - 12 q^{-6} + 2 q^{-5} \\
&- 6 q^{-4} + 13 q^{-3} - 5 q^{-2} + 16 q^{-1} - 8 + 2 q - 2 q^{2} - 4 q^{3} + 4 q^{4} - 7 q^{5} + 6 q^{6} - 6 q^{7} + 5 q^{8} - q^{9}\\
\KM_{3, 11n42(2,1)}^{[9]}(q) = &q^{-16} - 9 q^{-13} + 15 q^{-12} - 16 q^{-11} + 18 q^{-10} - 14 q^{-9} + 6 q^{-8} + 3 q^{-7} - 15 q^{-6} + 28 q^{-5} - 34 q^{-4} \\
&+ 31 q^{-3} - 23 q^{-2} + 17 q^{-1} - 6 - 6 q + 12 q^{2} - 20 q^{3} + 15 q^{4} - 9 q^{5} + 10 q^{6} - 4 q^{7} + q^{8} - q^{9}\\
\KM_{3, 11n42(2,1)}^{[10]}(q) = &-q^{-16} + q^{-15} - q^{-14} + 4 q^{-13} - q^{-12} - 2 q^{-10} - 5 q^{-9} + 5 q^{-8} - 6 q^{-7} + 8 q^{-6} - 2 q^{-5} \\
&+ 3 q^{-4} + 3 q^{-3} - 7 q^{-2} + 4 q^{-1} - 9 + 4 q + 2 q^{3} + 3 q^{4} - 3 q^{5} + q^{6} - 2 q^{7} + q^{8}\\
\KM_{3, 11n42(2,1)}^{[11]}(q) = &q^{-15} - 2 q^{-14} + 2 q^{-13} - 4 q^{-12} + 4 q^{-11} - 2 q^{-10} + 2 q^{-9} + 2 q^{-8} - 5 q^{-7} + 6 q^{-6} - 8 q^{-5} \\
&+ 6 q^{-4} - 5 q^{-3} + 2 q^{-2} + 2 q^{-1} - 2 + 4 q - 4 q^{2} + 2 q^{3} - 2 q^{4} + q^{5}\\
    \end{aligned}
  \end{equation}
\end{tiny}



\bibliographystyle{hamsalpha}
\bibliography{biblio}
\end{document}